\documentclass{amsart}[11pt]
\usepackage{color,amssymb,comment}
\usepackage{graphicx}
\usepackage{amsrefs}
\usepackage{enumerate}

\newcommand{\R}{\mathbb{R}}

\newcommand{\infinity}{\infty}

\newcommand{\bdy}{\partial}


\newcommand{\e}{\operatorname{e}}




\theoremstyle{plain}
\newtheorem{lemma}{Lemma}  
\newtheorem{theorem}[lemma]{Theorem}

\newtheorem{proposition}[lemma]{Proposition}
\newtheorem{definition}[lemma]{Definition}

\renewcommand{\v}{\mathbf{v}}
\newcommand{\Laplace}{\triangle}
\newcommand{\vol}{\operatorname{vol}}

\begin{document}

\author{Samuel Lisi}
\title{Dividing sets as nodal sets of eigenfunctions of a Laplacian}
\date{31 March 2010}
\begin{abstract}
We show that for any convex surface $S$ in a contact 3-manifold,
there exists a metric on $S$ and a neighbourhood contact isotopic to
$S \times I$ with the contact structure given by $\ker ( u dt - \star du)$
where $u$ is an eigenfunction of the Laplacian on $S$ and $\star$ is the
Hodge star from the metric on $S$.
This answers a question posed by Komendarczyk \cite{Komendarczyk}.
\end{abstract}

\maketitle
Given a convex surface $S$ in a contact 3-manifold, we show the
existence of a metric defined on a tubular neighbourhood of $S$, adapted
to the contact structure, for which the dividing curves are nodal curves
of an eigenfunction of the Laplacian on $S$.  In our construction, we show
that any dividing set may be realized in this way.
This addresses two questions raised by Komendarczyk \cite{Komendarczyk}.

\begin{definition}
  Let $(M, \xi)$ be a contact 3-manifold with co-oriented contact
  structure
  $\xi$.  A metric $g$ on $M$ is \textit{adapted} to the contact structure
  if there exists a contact form $\alpha$ generating the contact structure
  so that $\star \alpha = d \alpha$.
\end{definition}

A class of examples of metrics adapted to a contact structure $\xi =
\ker \alpha$
is given by taking an almost complex structure $J$ on $\xi$ compatible
with $d\alpha$, i.e.~so that $d\alpha(\cdot, J \cdot)$ is a metric
on $\xi$.
Then, we construct a metric adapted to $\xi$ by taking
$g = \alpha^2 + d\alpha(\cdot, J\cdot)$.

\begin{definition}[Convex surface \cite{GirouxConvex}]
  A surface $S$ in a contact 3-manifold $(M, \xi)$ is \textit{convex}
  if there exists a (local) vector field $\v$ transverse to $S$,
  so that $L_\v \xi = 0$.\\
  The \textit{dividing set} is the set of all points on $S$
  where $\v \in \xi$. (The contact condition forces this to be an embedded
  multicurve in $S$.)\\
  The dividing set divides the surface $S$ into two open submanifolds,
  $S_+$ on which $\v$ is positively transverse to $\xi$ and $S_-$ on which
  $\v$ is negatively transverse to $\xi$. 
\end{definition}

Our main result can then be stated as follows:
\begin{theorem} \label{T:metric}
  Let $S$ be a convex surface in the contact 3-manifold $(M, \xi)$.
  Then, there exist an isotopic surface $S'$, an adapted metric $g$ and
  an eigenfunction $u$ of $\Laplace_g|_{S'}$ so that a neighbourhood
  of $S'$ is contactomorphic to $S' \times I$ with the contact structure
  \[ \ker ( u dt - \star du). \]
  Furthermore, this metric $g$ may be taken to be $t$--translation
  invariant.
\end{theorem}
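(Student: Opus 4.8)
The plan is to reduce the theorem to a purely $2$-dimensional realization problem through Giroux's theory, and then to solve that problem by gluing Dirichlet eigenfunctions while tuning the metric by hand. First I would record the relevant local model. Given a metric $h$ on $S$ and $u \in C^\infty(S)$, form on $S \times \R$ (coordinate $t$) the $t$-independent $1$-form $\alpha = u\,dt - \star_h du$. Since $u$ is $t$-independent, $d \star_h du = -(\Laplace_h u)\,\omega_h$ (with the convention $\Laplace_h = -\operatorname{div}\operatorname{grad}$ and area form $\omega_h$), and a direct computation gives
\[ \alpha \wedge d\alpha = \big( u\,\Laplace_h u + |du|_h^2 \big)\, \omega_h \wedge dt. \]
Hence if $u$ is an eigenfunction, $\Laplace_h u = \lambda u$ with $\lambda > 0$, then $\alpha$ is contact exactly when $\lambda u^2 + |du|_h^2 > 0$ everywhere, i.e.\ exactly when $0$ is a regular value of $u$. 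In that case $L_{\partial_t}\alpha = 0$, so $\v = \partial_t$ is a contact vector field, every slice $S \times \{c\}$ is convex, and its dividing set is $\{\alpha(\partial_t)=0\} = \{u=0\}$, with $S_\pm = \{\pm u > 0\}$. A short $3$-dimensional Hodge computation shows that the $t$-invariant product metric $g = dt^2 + h$ satisfies $\star_g \alpha = d\alpha$, i.e.\ $g$ is adapted, precisely when $\lambda = 1$; since a constant rescaling of $h$ rescales $\lambda$ inversely while leaving $\star_h du$ unchanged, I can always arrange $\lambda = 1$ at the end.

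With the local model in hand, the theorem follows once I produce, for the given dividing multicurve $\Gamma \subset S$, a metric $h$ and an eigenfunction $u$ of $\Laplace_h$ with positive eigenvalue whose nodal set is exactly $\Gamma$ and whose sign regions $\{\pm u > 0\}$ match the given coorientation of $S_\pm$. Indeed, the model then exhibits $S' = S \times \{1/2\}$ as a convex surface with dividing set $\Gamma$ inside $(S \times I, \ker\alpha)$, carrying the adapted, $t$-invariant metric $g = dt^2 + h$; and by Giroux's uniqueness theorem for neighbourhoods of convex surfaces \cite{GirouxConvex}, the germ of the contact structure along a convex surface depends only on the isotopy class of the dividing set together with the $S_\pm$ decomposition, so a neighbourhood of $S'$ is contactomorphic to the given neighbourhood of $S$. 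To build $(h,u)$ I would glue first Dirichlet eigenfunctions: choose $h$, let $u_\pm > 0$ be the first Dirichlet eigenfunctions of $(S_\pm, h)$ with eigenvalues $\lambda_\pm$, and set $u = u_+$ on $S_+$ and $u = -u_-$ on $S_-$. By the Hopf lemma the normal derivatives of $u_\pm$ are nonvanishing along $\Gamma$, so $0$ is automatically a regular value; and the glued $u$ is a genuine smooth eigenfunction with nodal set exactly $\Gamma$ provided (i)~$\lambda_+ = \lambda_-$ and (ii)~the normal derivatives $|\partial_\nu u_+|$ and $|\partial_\nu u_-|$ agree pointwise along $\Gamma$, for then $u$ is $C^1$ across $\Gamma$ and elliptic regularity upgrades it to smooth.

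The main obstacle is precisely the simultaneous matching (i)--(ii). Condition (i) is a single scalar constraint, easily met by a constant rescaling of the metric on one side, but (ii) is an identity between functions on $\Gamma$, an infinite-dimensional condition that cannot be met by tuning finitely many parameters. My plan is to exploit the full conformal freedom in the metric on each side. Writing $h = e^{2\phi} h_0$ for a fixed background $h_0$, I would solve (ii) by deforming $\phi$ in a collar of $\Gamma$: a conformal factor concentrated near a point $p \in \Gamma$ changes the local density and hence $\partial_\nu u_\pm(p)$ in an essentially pointwise-controllable way, so I expect an implicit-function or continuity argument to match the two normal-derivative profiles while preserving positivity of $u_\pm$, re-equalizing the eigenvalues by a constant scale afterward. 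It is convenient to take $h$ to be a reflection-symmetric product $ds^2 + g_\Gamma$ on a collar $\Gamma \times (-\delta, \delta)$, so that near $\Gamma$ the eigenfunction separates variables and the matching condition becomes the vanishing of the even-in-$s$ part of $u$; this localizes the analytic difficulty to the collar and is where I expect the real work to lie.

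Once $(h,u)$ is constructed, the remainder is assembly: rescale $h$ by the constant $\lambda$ so that the eigenvalue becomes $1$, which leaves $\alpha = u\,dt - \star_h du$ and its dividing set unchanged; form the $t$-invariant metric $g = dt^2 + h$, which is adapted by the computation of the first paragraph; and apply Giroux's uniqueness \cite{GirouxConvex} to identify a neighbourhood of $S' = S \times \{1/2\}$ with the given neighbourhood of $S$. This yields the isotopic surface $S'$, the adapted $t$-translation-invariant metric $g$, and the eigenfunction $u$ of $\Laplace_g|_{S'}$ realizing the contact structure as $\ker(u\,dt - \star du)$, as required.
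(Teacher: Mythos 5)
Your first paragraph and final assembly step (the local model computation, the adaptedness of the product metric $g = dt^2 + h$, and the appeal to Giroux's flexibility theorem to identify the neighbourhood) reproduce the paper's reduction of Theorem \ref{T:metric} to a two-dimensional realization problem essentially verbatim, and that part is correct. The difference, and the problem, lies in how you propose to solve the two-dimensional problem: you fix a metric $h$ first and try to glue the first Dirichlet eigenfunctions $u_\pm$ of $(S_\pm, h)$ across $\Gamma$. As you yourself note, this forces the simultaneous constraints $\lambda_+ = \lambda_-$ and $|\partial_\nu u_+| = |\partial_\nu u_-|$ pointwise along $\Gamma$, and the latter is an infinite-dimensional matching condition. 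Your plan to meet it --- deforming a conformal factor $\phi$ near points of $\Gamma$ and invoking ``an implicit-function or continuity argument'' --- is not a proof; it is a restatement of the difficulty. Concretely: under $h = e^{2\phi}h_0$ the eigenvalue equation becomes $\Laplace_{h_0} u = \lambda e^{2\phi} u$, so a conformal perturbation enters as a weight in a global spectral problem; the resulting change in $\partial_\nu u_\pm$ at a point of $\Gamma$ is \emph{nonlocal} (the eigenfunction responds globally to the perturbation), the eigenvalues $\lambda_\pm$ move as well, and positivity and simplicity of the ground state must be preserved throughout. No Banach-space setup, linearization, or surjectivity statement is given, and there is no reason offered why the two normal-derivative profiles lie in the image of this deformation. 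This is precisely the hard inverse-spectral problem that Komendarczyk's approach \cite{Komendarczyk} confronts (he resolved it when $\Gamma$ is connected), and it is exactly what the present paper is designed to avoid.

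The paper's route inverts the order of quantifiers in a way that dissolves the matching problem: it does not fix a metric and look for an eigenfunction, but first constructs a function $F$ on $S$ --- subharmonic on one side, superharmonic on the other, and exactly linear, $F(s,t) = Cs$, in a collar of $\Gamma$ --- using Stein structures on the open surfaces $S_\pm$, cylindrical ends, and an elementary convex-interpolation lemma. It then sets $u = \sin(F)$ and \emph{defines} the area form afterwards,
\[
\Omega = \left( |dF|^2 - \frac{\cos(F)}{\sin(F)}\,\Laplace F \right)\omega,
\]
so that $d(du \circ j) = u\,\Omega$ holds by construction; positivity of the bracket is arranged by the sign conditions on $\Laplace F$ and the nonvanishing of $dF$ near $\Gamma$. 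Because the conformal factor (equivalently, the area form in a fixed conformal class) is chosen \emph{after} the candidate eigenfunction, smoothness across $\Gamma$ is automatic from the explicit linear model in the collar, and no transmission condition between two separately defined eigenfunctions ever arises. If you want to complete your argument along your own lines, the gap you must fill is a genuine theorem in spectral geometry (prescribing the Dirichlet-to-Neumann data of ground states on both sides of an interface by conformal deformation); alternatively, replacing your gluing step by the paper's ``choose $u$ first, then $\Omega$'' device turns your outline into a complete proof.
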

(Here, we take the conventions $\alpha \wedge \star \alpha = |\alpha|_g^2 d \vol$, and 
$\Laplace_g u = \star d \star d u$.)

Komendarczyk \cite{Komendarczyk} proved this result in the special
case that the dividing set has one connected component, by using
techniques from spectral geometry.  In recent personal communication,
he has explained to the author a possible extension of these methods
to the general case.  In contrast to his methods, we prove the
result using ``soft'' techniques in contact topology.

The relationship between adapted metrics and contact topology has just
recently begun to be exploited, notably by Etnyre and Komendarczyk
\cite{EtnyreKomen} and by Etnyre, Komendarczyk and Massot in a contact version
of the sphere $1/4$ pinching result~\cite{EtnyreKomenMassot}.

The main interest of convex surfaces in contact topology comes from
Giroux's flexibility theorem\cite{GirouxConvex}:
\begin{theorem}
  Suppose $\Sigma$ is a closed convex surface in $(M, \xi)$, with
  transverse
  contact vector field $\v$ and dividing curves $\Gamma$.
  Suppose $\mathcal F$ is a singular foliation on
  $\Sigma$ divided by $\Gamma$.
  Then, there exists an isotopy $\phi_s$, $s\in [0, 1]$, so that
  $\phi_0(\Sigma) = \Sigma$, $\xi|_{\phi_1(\Sigma)} = \phi_1(\mathcal F)$,
  $\phi$ fixed on $\Gamma$ and $\phi_s(\Sigma)$ transverse to $\v$
  for all $s$.
\end{theorem}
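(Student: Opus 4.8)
The plan is to prove this ``soft'' statement by working in a model neighbourhood produced by the contact vector field and reducing the three-dimensional isotopy problem to a first-order equation for a single function on $\Sigma$. Since $L_\v\xi = 0$, the flow of $\v$ identifies a neighbourhood of $\Sigma$ with $\Sigma\times\R$ in which $\v = \partial_t$ and the contact structure is the kernel of a $t$-invariant form $\alpha = \beta + u\,dt$, where $\beta$ is a $1$-form and $u$ a function on $\Sigma$; here $\Gamma = \{u=0\}$ and the characteristic foliation on $\Sigma\times\{0\}$ is $\ker\beta$. The linchpin is the observation that any isotopy keeping the surface transverse to $\v$ is, up to reparametrisation, a family of graphs $\Sigma_{f_s} = \{(x, f_s(x))\}$ with $f_0\equiv 0$, and that, because $\alpha$ is $t$-invariant, the characteristic foliation of $\Sigma_{f}$ pulled back to $\Sigma$ is exactly $\ker(\beta + u\,df)$. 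Transversality to $\v$ is automatic for graphs, so every $\Sigma_{f_s}$ is automatically convex with dividing set the projection of $\{u=0\}$; the condition that $\phi$ be fixed on $\Gamma$ becomes simply $f_s|_\Gamma\equiv 0$.

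With this reduction the theorem becomes a purely two-dimensional assertion: given the target foliation $\mathcal F$ divided by $\Gamma$, find $f$ with $f|_\Gamma = 0$ and $\ker(\beta + u\,df) = \mathcal F$. Because the intermediate graphs are automatically convex and transverse to $\v$, no separate work is needed for the path: I would simply take $f_s = s f$ and set $\phi_s(x) = (x, sf(x))$, so that the entire content is the existence of the single function $f$. Writing $X$ for a vector field directing $\mathcal F$, the condition $(\beta + u\,df)(X) = 0$ is the transport equation $u\,(Xf) = -\beta(X)$ along the leaves of $\mathcal F$: where $u\neq 0$ it prescribes the derivative of $f$ along each leaf, to be integrated from the initial value $f=0$ on $\Gamma$.

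Here the divided hypothesis enters decisively. It provides an area form $\omega$ and a directing field $X$ with $\operatorname{div}_\omega X$ positive on $\Sigma_+$ and negative on $\Sigma_-$, and with $X$ transverse to $\Gamma$; a divergence (Bendixson) argument then rules out contractible closed leaves and Reeb-like recurrence on each side, so that the leaf space is well enough behaved for the transport equation to be integrated leaf by leaf. Near $\Gamma$ I would first put $\beta$, $u$ and $\mathcal F$ into a standard collar model in which the realisation is explicit: although $u$ vanishes on $\Gamma$, the isotopy is only required to fix $\Gamma$ pointwise, so the normal derivative of $f$ at $\Gamma$ is free, and the finite product of this normal derivative with the simple zero of $u$ is exactly what rotates the characteristic foliation into the prescribed direction of $\mathcal F$ along $\Gamma$. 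Away from a collar of $\Gamma$ the factor $u$ is bounded away from zero and the equation is regular, so $f$ is obtained by integrating along leaves starting from the values already fixed on the collar boundary.

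The main obstacle is precisely this global construction of $f$: reconciling the degeneration of the transport equation at $\Gamma$ with the requirement $f|_\Gamma = 0$, and ensuring that integrating the prescribed leaf-derivative produces a single-valued smooth function. The degeneration is resolved by the collar normal form just described, which trades the blow-up of $u\,(Xf) = -\beta(X)$ at $\Gamma$ for a free choice of normal derivative; the single-valuedness amounts to checking that the two endpoints of each leaf on the collar boundary, and the periods along any essential closed leaves, are compatible, which the divided structure together with the freedom to rescale the directing field of $\mathcal F$ can be arranged to satisfy. Once $f$ is constructed smoothly, the family $f_s = sf$ furnishes the required isotopy $\phi_s$, fixed on $\Gamma$ and keeping $\Sigma$ transverse to $\v$ throughout, with $\xi|_{\phi_1(\Sigma)} = \phi_1(\mathcal F)$.
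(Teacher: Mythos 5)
First, a framing remark: the paper does not prove this statement at all --- it is Giroux's flexibility theorem, quoted from \cite{GirouxConvex} and used as a black box in the proof of Theorem \ref{T:metric}. So your proposal can only be measured against Giroux's own argument, and against correctness.

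Your opening reduction is the right first move: in the invariant neighbourhood, surfaces transverse to $\v$ are graphs, and the characteristic foliation of the graph of $f$, pulled back by the graph map, is $\ker(\beta + u\,df)$. The fatal problem is the ansatz $\phi_s(x) = (x, sf(x))$, i.e.\ a purely \emph{vertical} isotopy. Along $\Gamma$ this can never work: at $p \in \Gamma$ we have $u(p)=0$, so $(\beta + u\,df)_p = \beta_p$ for \emph{every} smooth $f$ --- the product $u\,df$ vanishes at $p$ no matter how large the normal derivative of $f$ is. The mechanism you invoke (``the finite product of this normal derivative with the simple zero of $u$ \dots rotates the characteristic foliation'') does not exist; the characteristic direction of any graph at a point of $\Gamma$ is $\ker\beta_p$, independent of $f$. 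Since a vertical $\phi_1$ has $d\phi_1(v) = (v, df(v))$, the projected tangent lines of $\phi_1(\mathcal F)$ along $\Gamma$ are those of $\mathcal F$ itself, so your ansatz could only realize foliations already tangent to the original characteristic foliation along $\Gamma$ --- which a general foliation divided by $\Gamma$ is not. In Giroux's theorem it is the \emph{derivative} of $\phi_1$ along the (pointwise fixed) curves $\Gamma$ that performs this rotation, and a graph-over-the-identity map has no such freedom.

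There is a second, equally concrete obstruction in the interior: closed leaves. If $\gamma$ is a closed leaf of $\mathcal F$ contained in $\{u \neq 0\}$, then $\ker(\beta + u\,df) = \mathcal F$ forces $df|_\gamma = -(\beta/u)|_\gamma$, hence $\oint_\gamma \beta/u = 0$. Divided foliations can perfectly well have such leaves (a repelling limit cycle is compatible with positive divergence), and the period is generically nonzero: on $T^2 = \R^2/\Z^2$ with $\alpha = \sin(2\pi y)\,dx + \cos(2\pi y)\,dt$ and $\Gamma = \{y = \pm 1/4\}$, a target foliation with closed leaf $\{y = 1/8\}$, directed near it by $\partial_x + (y-\tfrac18)\partial_y$ (divergence $1>0$), has $\oint \beta/u = \tan(\pi/4) = 1 \neq 0$. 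Your proposed fix --- ``the freedom to rescale the directing field of $\mathcal F$'' --- is vacuous, because the equation $(\beta + u\,df)(X) = 0$ is homogeneous in $X$: rescaling $X$ changes nothing. The only way to kill such periods is to move the leaf itself, i.e.\ the isotopy must have a horizontal component, which your ansatz forbids. This is exactly why the actual proof is structured differently: Giroux realizes $\mathcal F$ as the characteristic foliation of an \emph{auxiliary} invariant contact form $\beta_1 + u_1\,dt$ (via a directing field and area form satisfying the divergence condition), normalizes so that $u_1 = u$, uses that for fixed $u$ the contact condition $u\,d\beta + \beta \wedge du > 0$ is affine in $\beta$ to interpolate between the two structures, and then applies Gray stability; the resulting ambient isotopy moves the surface both vertically and horizontally, and the horizontal part is precisely what your construction is missing.
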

Heuristically speaking, this tells us that the neighbourhood of $S$
is described, up to isotopy, by the dividing curves.  
In particular, if $\Sigma \times \R$
admits two translation invariant contact structures giving the same
dividing curves on $S$ and cutting out the same $S_\pm$ regions,
then the two contact structures are isotopic.

The proof of Theorem \ref{T:metric} will be by constructing the metric
and the eigenfunction, and will exploit the soft aspects of symplectic
and contact topology.  Instead of constructing these directly, we will
construct a symplectic form on $S$ and an almost complex structure
compatible with it.  Reformulated in this way, we obtain:
\begin{theorem} \label{T:contact}
  Let $S$ be a closed, connected surface, and consider the $t$-invariant
  contact structure on $\R \times S$ given by $\xi_0 = \ker(\alpha_0 =
  fdt + \beta)$.
  Denote the dividing curves by $\Gamma = f^{-1}(0) \subset S$.

  Orient $S$ by $i_{\partial_t}(\alpha_0 \wedge d\alpha_0)$.
  Then, there exist an area form $\Omega$ on $S$, compatible with the
  orientation, a compatible complex structure $j$, and a function $u :
  S \rightarrow \R$ with
  \[
  d( du \circ j ) = u \Omega \text{ and }
  u^2 + |du|^2 > 0,
  \]
  and so that $u^{-1}(0) = f^{-1}(0)$, and so the contact form 
  $u dt + du \circ j$ induces the same $S_+$ and $S_-$ regions.
\end{theorem}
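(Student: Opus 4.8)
The plan is first to unwind what the three conditions really ask for. For a compatible pair $(\Omega,j)$ the metric is $g=\Omega(\cdot,j\cdot)$ and $\Omega=\vol_g$, so on $1$-forms $du\circ j=-\star_g du$; since $\Laplace_g u=\star_g d\star_g du$, one gets $d(du\circ j)=-d\star_g du=-(\Laplace_g u)\,\Omega$. Thus the stated equation $d(du\circ j)=u\Omega$ is \emph{exactly} the eigenfunction equation $\Laplace_g u=-u$. A short computation gives $(u\,dt+du\circ j)\wedge d(u\,dt+du\circ j)=(u^2+|du|^2)\,dt\wedge\Omega$, so $u^2+|du|^2>0$ is precisely the condition that $u\,dt+du\circ j$ be a contact form; as its contact vector field is $\partial_t$, the dividing set is $u^{-1}(0)$ and the regions are $S_+=\{u>0\}$, $S_-=\{u<0\}$. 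Hence it suffices to produce a metric $g$ and a function $u$ with $\Laplace_g u=-u$, $u^{-1}(0)=\Gamma$, $du\neq 0$ along $\Gamma$, and $\operatorname{sign} u=\operatorname{sign} f$ (which makes the $S_\pm$ regions match those of $f\,dt+\beta$).

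\textbf{The soft idea: reverse the eigenvalue problem.} Rather than solve an eigenvalue problem on a fixed metric, I would choose $u$ and the conformal structure $j$ first and then \emph{manufacture} the metric. Fix a representative metric $g_0$ for $j$. In dimension two $\Laplace_{e^{2\phi}g_0}u=e^{-2\phi}\Laplace_{g_0}u$, so $\Laplace_g u=-u$ for $g=e^{2\phi}g_0$ holds precisely when $e^{2\phi}=-\Laplace_{g_0}u/u$. The entire construction then reduces to arranging that the right-hand side is a smooth \emph{positive} function: positivity requires $\Laplace_{g_0}u$ to be cosigned oppositely to $u$ (i.e.\ $u$ superharmonic on $S_+$, subharmonic on $S_-$), and smoothness across $\Gamma$ requires $\Laplace_{g_0}u$ to vanish on $\Gamma$ to the same first order as $u$, with positive ratio. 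Once this holds I set $g=e^{2\phi}g_0$, $\Omega=\vol_g$, keep $j$, and obtain $\Laplace_g u=-u$, i.e.\ $d(du\circ j)=u\Omega$, by fiat.

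\textbf{Collar normalization and extension.} Near $\Gamma$ I would use a flat product collar $\Gamma\times(-\epsilon,\epsilon)$ with normal coordinate $s$ (oriented positively into $S_+$, the collar oriented so that $ds\wedge d\theta$ agrees with $i_{\partial_t}(\alpha_0\wedge d\alpha_0)$) and take $u=\sin s$. Then $\Laplace_{g_0}u=-u$, so $-\Laplace_{g_0}u/u\equiv 1$ and $\phi=0$ there, while $du=\cos s\,ds\neq0$ on $\Gamma$, the nodal set is exactly $\{s=0\}=\Gamma$, and the signs are correct. Away from the collar I would extend $u$ to be positive on $S_+$ and negative on $S_-$, superharmonic resp.\ subharmonic for $g_0$, matching the collar $1$-jets; concretely one can prescribe $\Laplace_{g_0}u$ to be a chosen function of the opposite sign to $u$ that dies to first order on $\Gamma$ like $u$, or splice first Dirichlet eigenfunctions of $(S_\pm,g_0)$ into the model near $\Gamma$. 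The conformal factor $e^{2\phi}=-\Laplace_{g_0}u/u$ is then smooth and positive (equal to $1$ near $\Gamma$), completing the data.

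\textbf{Where the difficulty lies.} The only genuine content is the extension in the previous paragraph: building, on each compact surface-with-boundary $\overline{S_\pm}$, a function of fixed sign, vanishing to first order along $\Gamma$ with the prescribed collar jet, whose $g_0$-Laplacian is cosigned with $u$ and also vanishes to first order along $\Gamma$, so that $-\Laplace_{g_0}u/u$ extends smoothly and positively across the dividing set. The subtle points are the interior critical points of $u$: one must avoid interior minima in $S_+$, where a positive-definite Hessian would force $\Laplace_{g_0}u>0$ for every metric, and at saddles one must exploit the freedom in $j$, tilting $g_0$ to stretch the negative Hessian direction so that $\Laplace_{g_0}u<0$, then assembling these pointwise choices of conformal class into a global $g_0$. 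I expect this sign-control and first-order matching along $\Gamma$ to be the crux; the cleanest route is to choose $u$ strictly superharmonic on $S_+$ and subharmonic on $S_-$ for a single fixed $g_0$, which removes the critical-point analysis entirely and leaves only the collar matching, already supplied by the $\sin s$ model. Everything else — the contact condition, the identification of $S_\pm$, and the reduction to $\Laplace_g u=-u$ — is formal.
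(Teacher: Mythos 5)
Your formal reductions are correct and coincide with the paper's: for a compatible pair $(\Omega,j)$ one has $d(du\circ j)=-(\Laplace_g u)\,\Omega$, so the equation $d(du\circ j)=u\Omega$ is the eigenfunction equation; $(u\,dt+du\circ j)\wedge d(u\,dt+du\circ j)=(u^2+|du|^2)\,dt\wedge\Omega$ gives the contact condition; and since the equation sees a conformal change of metric only through the factor $-\Laplace_{g_0}u/u$, it suffices to produce $(j,u)$ with $u^{-1}(0)=\Gamma$, $du\neq0$ along $\Gamma$, and $-\Laplace_{g_0}u/u$ smooth and positive. Your collar model $u=\sin s$ is also exactly the paper's (it takes $F=Cs$ on the collar and sets $u=\sin F$). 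But your proof stops where the theorem actually starts: the extension of $u$ from the collar into $S_\pm$ with sign-controlled Laplacian is never constructed, and both of your concrete suggestions for it fail. Prescribing $\Laplace_{g_0}u$ is circular: if you solve the Poisson--Dirichlet problem on $S_+$ minus the collar with right-hand side $h<0$ and boundary value $\sin\epsilon$, the maximum principle does give positivity, but the normal derivative and all higher jets of the solution along the seam $\{s=\epsilon\}$ are then determined by $h$ rather than free, so the function glued to $\sin s$ is merely continuous; you cannot prescribe Cauchy data for an elliptic equation. Splicing first Dirichlet eigenfunctions has the same defect: a cutoff combination such as $\chi\sin s+(1-\chi)\psi_+$ acquires terms involving $d\chi$ and $\Laplace_{g_0}\chi$ of uncontrolled sign in the splicing annulus, and nothing in your argument restores $\Laplace_{g_0}u<0$ there (note also that the Dirichlet eigenvalues of the two sides generally differ, so the one-sided limits of $-\Laplace_{g_0}u/u$ along $\Gamma$ would not even agree without further correction).

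This extension-with-sign-control is precisely what the paper's two lemmas supply, and they do it by \emph{not} fixing the conformal structure in advance, which is where your ``single fixed $g_0$'' route makes the problem harder than it needs to be. The paper takes a Stein (strictly subharmonic) function on each open piece $S_\pm$, attaches half-infinite cylinders with the standard complex structure to the boundary circles, extends the subharmonic function by the explicit rotationally invariant formula $A_i(\e^{s}-1/\e)-1$, and then performs the interpolation to the linear model $2s$ entirely in the single variable $s$, where weak subharmonicity is just convexity of a function of one real variable; an elementary calculus lemma interpolates between the exponential and the linear function preserving convexity and monotonicity. This reduction of the sign condition to one-dimensional convexity on a cylindrical end is the mechanism your proposal is missing; if you insist on a fixed background metric, you would instead have to prove a genuine potential-theoretic extension theorem (positive strictly superharmonic function with prescribed collar jet), which is not easier than the original problem.
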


\begin{proof}[Proof of Theorem \ref{T:metric}]
  We will now show that Theorem \ref{T:contact} implies \ref{T:metric}.

Let $S$ be a convex surface in $(M, \xi)$ with transverse contact
vector field
$\v$.  Let $\Gamma$ be the dividing set.  Then, by following the flow
of $\v$,
there exists a neighbourhood of $S$ in $M$ contactomorphic to a
neighbourhood of
$\{0\} \times S$ in $\R \times S$, with contact structure given by the
contact form
\[
\alpha_0 = fdt + \beta,
\]
where $f$ and $\beta$ are a function and a one-form on $S$ respectively.
Then, $f^{-1}( 0) = \Gamma$.

Let $u$, $j$ and $\Omega$ be as Theorem \ref{T:contact}.
This then gives a contact form on $\R \times S$ by $\alpha_1 = u dt +
du \circ j$.
Define a metric on $\R \times S$ by setting $g = dt^2 + \Omega(\cdot,
j \cdot)$.
We now observe $\Laplace_g|_{S} u \Omega = -d (du \circ j )$, so $u$
as in Theorem
\ref{T:contact} indeed is an eigenfunction of the Laplace operator of $g$
restricted to $S$.
It now remains to verify that this metric $g$
is adapted to the contact structure.  We observe that
$d \vol_g = dt \wedge \Omega$ and that $g( \partial_t, \cdot) = dt$,
$g(-X_u, \cdot) = du\circ j$ and $g(-jX_u, \cdot) = du$.  It follows that
$\star dt = \Omega$ and $\star du\circ j = -dt \wedge du$.  Thus,
\[
\star (udt + du\circ j) = u \Omega - dt \wedge du = d \alpha_1.
\]
as required.

We now have two translation invariant contact structures on $\R \times
S$, generated
by the contact forms $\alpha_0$ and $\alpha_1$.  The dividing sets and
induced orientations
on $S$ are the same, so by Giroux's Flexibility Theorem,
the contact structures are contact isotopic.
Following the image of the isotopy in $M$,
and restricting to a sufficiently small interval around $t = 0$ gives
the resulting $S'$.
\end{proof}

The main result, Theorem \ref{T:contact}, is a corollary of the following
result:
\begin{proposition} \label{P:main}
Let $S$ be a closed, connected surface and
$\Gamma \subset S$ be a collection of embedded circles dividing
$S$ into two regions, so that :
\[
S \cong S_- \cup [-1, 1] \times \Gamma \cup S_+,
\]
where $S_\pm$ are two open submanifolds of $S$.

Then, there exist a smooth function $u : S \to \R$, an area form
$\Omega$ and a compatible complex structure $j$ on $S$
so that :
\begin{align*}
d(du \circ j ) &= u \Omega \\
u^{-1}(0) &= \Gamma \\
u^2 + |du|^2 &> 0.
\end{align*}
\end{proposition}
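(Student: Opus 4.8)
The plan is to treat $u$, $\Omega$ and $j$ as data to be designed together, rather than to solve a fixed eigenvalue problem, exploiting the fact that the metric is ours to choose. First note that $u^2 + |du|^2 > 0$ merely says that $\Gamma = u^{-1}(0)$ is a regular level set and that $u$ may have critical points only away from $\Gamma$. Note also, exactly as in the proof of Theorem \ref{T:metric}, that $d(du \circ j) = u\Omega$ is equivalent to $\Laplace_g u = -u$ for the metric $g = \Omega(\cdot, j\cdot)$; so the task is to produce a function together with a conformal structure and a compatible area form for which it is a genuine eigenfunction with eigenvalue normalized to $-1$ and with prescribed nodal set $\Gamma$. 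The key structural remark is that, once $j$ is fixed, the two-form $d(du\circ j)$ is determined, and \emph{any} positive area form is automatically compatible with $j$. Hence it suffices to find $j$ (and $u$) so that $d(du\circ j)$ vanishes exactly along $\Gamma$, agrees in sign with $u$ off $\Gamma$, and so that $\Omega := d(du\circ j)/u$ extends across $\Gamma$ to a smooth positive area form; one then simply declares $\Omega$ to be this quotient.

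I would build the data in three regimes and glue. Near $\Gamma$, I use the collar $[-1,1]\times\Gamma$ with the flat product structure and a $u$ depending only on the normal coordinate $s$, say $u = \varepsilon\sin(s)$. This is the standard one-dimensional eigenfunction; it vanishes precisely on $\{0\}\times\Gamma$ with $du \neq 0$ there, and it makes $\Omega = d(du\circ j)/u$ manifestly smooth and positive across $\Gamma$. On the regular region $\{du \neq 0\}$ I adapt $j$ to the level sets of $u$: taking $u$ itself as one coordinate and an angular coordinate $\tau$ along the level curves, I posit $du\circ j = \phi\, d\tau$, whence $d(du\circ j) = \phi_u\, du\wedge d\tau$ and $\Omega = (\phi_u/u)\, du\wedge d\tau$. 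This is a positive area form precisely when $\phi_u/u > 0$ and $\phi \neq 0$, a simple sign condition on the single function $\phi$ that is readily solved; compatibility of $j$ with $\Omega$ reduces to the same condition. Thus the equation can always be arranged on the complement of the critical points.

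The remaining and most delicate regime is a neighbourhood of each interior critical point of $u$, where the level-set coordinates degenerate, and this is where I expect the main obstacle. Two things must be reconciled. First, at a critical point $p$ the equation forces $\Laplace_g u(p) = -u(p)$, and since there the Laplacian is a metric trace of the Hessian while the metric at $p$ is free, local maxima of $u$ can occur only where $u>0$ (in $S_+$) and local minima only where $u<0$ (in $S_-$), whereas saddles may occur anywhere provided the conformal structure is chosen so that the trace of the (indefinite) Hessian has the correct sign. Second, I must choose $u$ with exactly this critical-point pattern, and here the topology of $S_\pm$ enters: when $\chi(S_\pm)\le 0$ critical points are unavoidable, but one can always arrange a Morse function that is positive on $S_+$, negative on $S_-$, has $\Gamma$ as a regular level, and distributes maxima, minima and saddles as above.

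The heart of the argument is therefore the local solvability near such a critical point: constructing, for a prescribed $u$ near $p$, a metric for which $\Laplace_g u = -u$ holds in a full neighbourhood and matches the level-set solution on the overlap. Because this is an underdetermined problem for the metric (one scalar equation against a three-parameter family of compatible $(\Omega, j)$), I would solve it with an explicit local model at each standard critical point and then interpolate, using the convexity of the space of positive area forms and the contractibility of the space of complex structures compatible with a fixed orientation to assemble the local pieces into a single smooth $(\Omega, j)$ on $S$. A final verification checks that the resulting $\Omega$ is everywhere positive, that $j$ is compatible, that $u^{-1}(0) = \Gamma$, and that $u^2 + |du|^2 > 0$, completing the construction.
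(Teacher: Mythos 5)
Your opening structural remark is the right one, and it is the same one the paper exploits: on a surface any area form positive for the orientation is compatible with $j$, so it suffices to produce $(u, j)$ for which $d(du \circ j)$ vanishes exactly on $\Gamma$ and has the sign of $u$ elsewhere, and then declare $\Omega := d(du\circ j)/u$; your collar model $u = \varepsilon \sin(s)$ near $\Gamma$ also matches the paper's. The gap is everything after that. Your third and fourth paragraphs defer precisely the hard part --- the local models at critical points of $u$ and the assembly of the local pieces --- and the gluing mechanism you propose does not work as described. The constraint $d(du\circ j) = u\Omega$ is a \emph{first-order differential} condition on $j$: the two-form $d(du\circ j)$ involves derivatives of $j$. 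So even if $j_1$ and $j_2$ each satisfy the sign condition on overlapping charts, interpolating between them with a cutoff (in any chart for the contractible space of compatible complex structures) produces error terms proportional to the derivative of the cutoff, which can destroy the sign of $d(du\circ j)$ exactly in the gluing region; neither contractibility of the space of $j$'s nor convexity of the space of area forms addresses a pointwise identity that is not preserved under such interpolation. Since critical points are unavoidable whenever $\chi(S_\pm) \neq 0$, this is not a peripheral case but the core of the problem, and it is left unproved.

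The paper sidesteps this difficulty entirely by not prescribing $u$ in advance. It first constructs (Lemmas \ref{L:main}, \ref{L:workhorse}, \ref{L:calculus}, via Stein structures on the open Riemann surfaces $S_\pm$ and an explicit convex interpolation on cylindrical ends) a function $F$ with $|F| < \pi/2$ which is strictly subharmonic on $S_+$ and strictly superharmonic on $S_-$ (in the sense that $d(dF\circ j)$ has a strict sign there), and which equals $Cs$ --- hence is harmonic with $dF \neq 0$ --- on a collar of $\Gamma$; it then sets $u = \sin(F)$. Writing $-(\Laplace F)\,\omega = d(dF\circ j)$, one computes
\[
d(du \circ j) = u \left( |dF|^2 - \frac{\cos F}{\sin F}\, \Laplace F \right) \omega,
\]
and the point is that the bracketed factor is \emph{automatically} positive: the sign conditions give $-\Laplace F/\sin F \ge 0$, and $\Laplace F$ vanishes only on the collar region where $dF \neq 0$, so the two terms never vanish simultaneously. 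No Morse-theoretic bookkeeping of critical points, no level-set coordinates, and no patching of complex structures is ever needed; the sub/superharmonicity of $F$ handles the critical points for free. To complete your argument you would have to actually prove the local solvability-plus-gluing near saddles and extrema; alternatively, replace ``prescribe $u$ and solve for $j$'' by ``prescribe a sub/superharmonic $F$ and take $u = \sin F$,'' which is the paper's route.
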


The remainder of this paper is a proof of Proposition \ref{P:main}.

\section*{Proof of Proposition \ref{P:main}}

The key step in the proof of Proposition \ref{P:main} is the following
Lemma, whose proof will come later. 
\begin{lemma} \label{L:main}
  Let $S = S_- \cup [-1, 1] \times \Gamma \cup S_+$ as in the hypothesis of \ref{P:main}.

There exist an area form $\omega$, a compatible complex structure $j$,
and a real valued function $F$ on $S$ so that
for some constants $C > 0$ and $\epsilon > 0$ the following properties hold:
\begin{enumerate}[(i)]
  \item $\max_S |F | < \frac{\pi}{2}$, $F^{-1}(0) = \{ 0 \} \times \Gamma
    \subset [-1,1] \times \Gamma$, 
  \item $F < 0 \text{ on } S_- \cup [-1,0) \times \Gamma$,
	$F > 0 \text{ on } (0, 1] \times \Gamma \cup S_+$,
	$dF \ne 0 \text{ on } [-1, 1]\times \Gamma.$
  \item $d( dF \circ j ) \le 0$ on $[-1, 0] \times \Gamma$,  and 
		$d( dF \circ j ) < 0 \text{ on } S_-$
 \item $d( dF \circ j) \ge 0 \text{ on } [0, 1] \times \Gamma$ 
	and $d( dF \circ j) > 0$ on $S_+$
 \item for $(s, t) \in (-\epsilon, \epsilon) \times \Gamma$,
    $F(s, t) = Cs$ and  $\omega = ds \wedge dt$.
 \end{enumerate}
\end{lemma}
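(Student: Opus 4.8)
The plan is to convert the four sign conditions (iii)--(iv) into statements about the sign of a Laplacian. As in the proof of Theorem~\ref{T:metric}, if $g = \omega(\cdot, j\cdot)$ then
\[
d(dF\circ j) = -\Laplace_g F\,\omega,
\]
so (iii)--(iv) say exactly that $F$ is strictly subharmonic on $S_-$, strictly superharmonic on $S_+$, and convex/concave in the collar direction on the two halves of $[-1,1]\times\Gamma$. Since we are free to choose all of $\omega$, $j$ and $F$, I would construct the data separately on the collar and on the two caps $S_\pm$, arranging agreement to infinite order along the gluing circles $\{\pm1\}\times\Gamma$.

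On the collar take a product: $\omega = ds\wedge dt$, the product complex structure $j\,\bdy_s = \bdy_t$, and $F(s,t) = \phi(s)$, where $\phi\colon[-1,1]\to\R$ satisfies $\phi(0) = 0$, $\phi' > 0$, $\phi(s) = Cs$ for $|s| < \epsilon$, $\phi$ convex on $[-1,0]$ and concave on $[0,1]$, and $|\phi| < \pi/2$. Then $dF\circ j = -\phi'(s)\,dt$, so $dF \neq 0$ and $F$ has the required signs, while $d(dF\circ j) = -\phi''(s)\,\omega$ is $\le 0$ on $[-1,0]\times\Gamma$, $\ge 0$ on $[0,1]\times\Gamma$, and zero where $\phi$ is linear. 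This yields (i), (ii), (v) and the collar parts of (iii), (iv) at once.

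To extend over $S_-$ (the surface $S_+$ is identical up to sign), first push the product collar a distance $\delta$ into $S_-$, keeping $F = \phi(s)$ with $\phi$ convex; this transplants the prescribed boundary $1$-jet and keeps $F$ subharmonic near $\bdy S_-$. On the remaining compact core $N$, whose boundary carries the constant value $\phi(-1-\delta) < 0$ and a fixed outward slope, choose a Morse function equal to its maximum on $\bdy N$ and with no interior local maxima --- for instance $-f$ for a Morse $f$ with minimum on $\bdy N$ and no interior minimum, which exists on any compact surface with boundary. Because $F$ has no interior maximum, at each interior critical point its Hessian has a positive eigenvalue, so one may choose a compatible metric $(\omega, j)$ on $N$ (a product near $\bdy N$) making $\Laplace_g F > 0$ throughout; then $d(dF\circ j) < 0$ on $N$, which is (iii). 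The maximum principle forces $F < \phi(-1-\delta) < 0$ in the interior of $N$, and $F = \phi(s) < 0$ on the pushed collar, so $F < 0$ on all of $S_-$. Finally the bound $|F| < \pi/2$ is arranged by scaling: if $\Laplace_g F > 0$ then $\Laplace_g(\lambda F) > 0$ for all $\lambda > 0$, so replacing $F$ by $\lambda F$ for small $\lambda$ (and rescaling $C$ and $\phi$ to match) compresses the range into $(-\pi/2, \pi/2)$ without disturbing any sign.

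The main obstacle is the cap construction, where three demands must be met simultaneously on a cap of arbitrary topology: a strict sign for $d(dF\circ j)$, a boundary $1$-jet dictated by the collar, and the range bound. For a fixed metric these look over-determined, which is precisely why I keep $(\omega,j)$ free. A strictly subharmonic function can have no interior maximum, so the heart of the matter is to produce a Morse function with no interior maxima and then exploit the metric freedom to make the trace of its Hessian positive even at saddle points, where the Hessian is indefinite: writing $g = \operatorname{diag}(a,b)$ in the Hessian eigenbasis at such a point, $\Laplace_g F = a^{-1}\mu_1 + b^{-1}\mu_2$ with $\mu_1 > 0 > \mu_2$, and taking $a$ small makes this positive. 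Once the sign is secured the maximum principle supplies the sign of $F$ and the scaling step removes the range constraint, while the global identity $\int_S d(dF\circ j) = 0$ is automatically consistent with (iii)--(iv) since the flux $\int_{\{0\}\times\Gamma} dF\circ j = -C\int_\Gamma dt < 0$.
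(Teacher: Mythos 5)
Your overall architecture (a flat product collar glued to two caps, with conditions (iii)--(iv) rephrased as strict sub/superharmonicity) is sound, and the collar model, the gluing, the scaling step and the sign bookkeeping are all correct. The gap is in the cap construction, and it sits exactly at the heart of the lemma. Your justification that a Morse function $F$ on a compact surface $N$ with no interior local maxima admits a compatible pair $(\omega, j)$ with $\Laplace_g F > 0$ everywhere consists of a pointwise linear-algebra argument at the critical points. But $\Laplace_g F>0$ is a pointwise condition on $g$ \emph{only} at critical points: away from them it involves the Christoffel symbols, i.e.\ the $1$-jet of $g$ (equivalently, $d(dF\circ j)$ contains a term $dF\wedge(\text{first derivatives of } j)$ that vanishes only where $dF=0$). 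So the local choices you describe --- one conformal structure near each saddle making the $g$-trace of the Hessian positive, something else elsewhere --- must be patched into a single global $j$, and since the pointwise constraint $j^2=-\Id$ is not convex, no partition-of-unity argument is available; you offer no mechanism for this globalization. It is genuinely nontrivial: the assertion ``every Morse function with no interior local maxima and with $\bdy N$ as its top regular level is strictly subharmonic for some complex structure'' is essentially the existence theorem for Stein structures in real dimension two (Eliashberg, Cieliebak--Eliashberg), and in its standard form that theorem only produces a $j$ making $h\circ F$ subharmonic for some increasing diffeomorphism $h$ of the target --- in general the function itself must be modified. (Such a target reparametrization would in fact be harmless for (i)--(v), but you neither prove nor cite any statement of this kind, and the pointwise Hessian remark is far from one.)

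The paper sidesteps this difficulty entirely by running the construction in the opposite order: instead of fixing a Morse function and hunting for $j$, it fixes $j$ and gets the strictly subharmonic function for free. Each cap $S_\pm$ is an open Riemann surface, hence Stein (Behnke--Stein), so it carries a strictly subharmonic exhaustion function; a large regular sublevel set captures all of the topology, and the remaining work is Lemma \ref{L:workhorse}: extend the Stein structure over a cylindrical end with an explicit model, then interpolate by the convexity-preserving calculus Lemma \ref{L:calculus} so that the function becomes linear ($2s$) near the new boundary, which is what allows the two caps to be glued to the flat collar. To repair your proof you would either need to import the two-dimensional Stein existence theory (constructing $F$ and $j$ simultaneously, Weinstein handle by handle, rather than fixing $F$ first), or simply replace your cap step by the Behnke--Stein argument above.
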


\begin{proof}[Proof of Proposition \ref{P:main}]

Let $F$, $j$ and $\omega$ be as in Lemma \ref{L:main}.
Define a real valued function on $S$ by
\[
u = \sin (F).
\]

Define $\triangle F$ by $-(\triangle F) \omega = d( dF \circ j )$.
Then, we obtain :
\begin{equation*}
\begin{split}
d( d u \circ j ) &= u | dF|^2 \omega + \cos (F) d( dF \circ j) \\
	&= u \left ( |dF|^2 - \frac{\cos(F)}{\sin(F)} \triangle F \right)
	\omega.
\end{split}
\end{equation*}

Observe that from the definition of $F$, $\triangle F = 0$ on $(-\epsilon, \epsilon) \times \Gamma$.
Since $|F| < \pi/2$, $\sin(F)$ has the same sign as $F$.  
Thus, $- \frac{1}{\sin(F)} \triangle F \ge 0$ is nonsingular, and only vanishes
in a subset of $[-1,1]\times \Gamma$, where $dF$ is non-vanishing.
Hence, 
\[
|dF|^2 - \frac{\cos(F)}{\sin(F)} \triangle F > 0.
\]
Thus, by taking
\[
\Omega =\left ( |dF|^2 - \frac{\cos(F)}{\sin(F)} \triangle F \right
) \omega
\]
we obtain a volume form on $S$ so that $d( du \circ j ) = u \Omega$.
We claim this triple of $u$, $\omega$ and $j$ has the desired properties.

Since $| F | < \frac{\pi}{2}$, it follows that
$u > 0$ on $(0, 1] \times \Gamma \cup S_+$, and that
$u < 0$ on $S_- \cup [-1, 0) \times \Gamma$.  Furthermore,
$u^{-1}(0) = F^{-1}(0) = \{ 0 \} \times \Gamma$.

To show $u^2 + |du|^2 > 0$, it suffices to check near
$u^{-1}(0) = \{0 \} \times \Gamma$.
Note, however, that in a neighbourhood of $\{0\} \times \Gamma$, $F(s,
t) = Cs$,
for some positive constant $C$,
and thus $du = \cos(F) dF = \cos(Cs) C ds$, which is non-vanishing
in a neighbourhood of $\{ 0 \} \times \Gamma$.	This completes the proof
of Proposition \ref{P:main}.
\end{proof}

We now prove the key Lemma \ref{L:main}.  This involves constructing a
weakly subharmonic
function on each of $S_+$ and $S_-$, strictly subharmonic away from the
dividing curves,
but harmonic near the boundary.
\begin{proof}[Proof of Lemma \ref{L:main}]
  Observe first that $S_+$ and $S_-$ admit Stein
  structures since they are open Riemann surfaces.
  Furthermore, recall that $\partial S_+ = \Gamma = \partial S_-$,
  with opposite orientations.
  We now apply the following Lemma (whose proof we defer) to each of $S_+$
  and $S_-$.

\begin{lemma} \label{L:workhorse}
Let $(\Sigma, j)$ be a compact Riemann surface with boundary.\\
Suppose $f : \Sigma \rightarrow \R$ is bounded below,
$f^{-1}(-1) = \partial \Sigma$ and
$ -d (df \circ j ) = \omega_0$ is a volume form compatible with $j$.

Then, there exist $\epsilon > 0$, a non-positive smooth function
$g : \Sigma \cup [-1, 0] \times \partial \Sigma \rightarrow (-\infinity,
0]$,
an extension of $j$ to $\Sigma \cup [-1, 0] \times \partial \Sigma$,
and a volume form $\omega$ on $\Sigma \cup [-1, 0] \times \partial \Sigma$
compatible with $j$,
with the following properties :
\begin{align*}
	j &= i \text{ on } [-1, 0] \times \Sigma\\
	g &= f \text{ on } \Sigma, \\
	g &< 0 \text{ on } \Sigma \cup [-1, 0) \times \partial \Sigma \\
	g&|_{[-\epsilon, 0] \times \partial \Sigma} : (s,t) \mapsto 2s, \\
	\omega&|_{[-\epsilon, 0] \times \partial \Sigma}  = ds \wedge
	dt \\
	-d ( dg &\circ j ) \ge 0 \text{ on } \Sigma \cup [-1, 0],
	\text{and } &-d ( dg \circ j ) = \omega \text{ on } \Sigma.
\end{align*}
\end{lemma}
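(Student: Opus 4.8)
The plan is to treat $\partial\Sigma$ as a contact-type boundary and to extend the given data across the attached collar $[-1,0]\times\partial\Sigma$ in two moves: first normalize the complex structure to the standard one near $\partial\Sigma$, so that no interpolation of $j$ is needed, and then bend the strictly subharmonic $f$ into the flat model $(s,t)\mapsto 2s$. I would begin by recording the geometry at $\partial\Sigma$. The hypothesis $-d(df\circ j)=\omega_0>0$ says that $f$ is strictly subharmonic up to the boundary; since $f$ is bounded below with $f^{-1}(-1)=\partial\Sigma$, the strong maximum principle forces $f<-1$ in the interior, and Hopf's lemma gives that the outward normal derivative of $f$ is strictly positive. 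Hence $df\neq 0$ along $\partial\Sigma$, equivalently the primitive $\lambda:=-df\circ j$ of $\omega_0$ restricts to a nowhere-vanishing $1$-form there.

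By uniformizing an annular neighbourhood of each boundary circle I obtain holomorphic coordinates $(s,t)$, $t\in\R/\ell\Z$, on a one-sided neighbourhood of $\partial\Sigma$ in which $j=i$ and $\partial\Sigma=\{s=-1\}$. The conformal scale is at my disposal: replacing $(s,t)$ by $(\kappa s,\kappa t)$ preserves $j=i$ and divides the boundary slope $f_s|_{s=-1}$ by $\kappa$, so after zooming out I may assume $0<f_s<2$ along $\partial\Sigma$ and that $f$ together with its derivatives is as small as I wish there. Because $f\equiv-1$ on $\partial\Sigma$ one moreover has $f_{tt}(-1,\cdot)=0$. I then attach $[-1,0]\times\partial\Sigma$ with $j=i$; since the structure is already standard at the seam, $j=i$ holds on the whole added region and matches the required model at $s=0$.

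The heart of the proof is extending $f$ to a subharmonic $g$ that equals $f$ near $s=-1$, equals $2s$ near $s=0$, and is negative on $[-1,0)$. I would first build a smooth subharmonic extension $\tilde f$ across the collar: extend $f$ linearly in $s$ by $\tilde f=f(-1,t)+f_s(-1,t)(s+1)$ and add a tiny convex correction $\nu(s+1)^2$. Since $f_{tt}(-1,\cdot)=0$ and the zoom-out makes the remaining $t$-derivatives small, $\triangle\tilde f$ is small, so a small $\nu$ makes $\triangle\tilde f\ge 0$ while keeping the $s$-slope of $\tilde f$ strictly below $2$ on all of $[-1,0]$. I then set $g:=\max_{\varepsilon}(\tilde f,\,2s)$, the regularized maximum with the harmonic function $2s$. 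Near $s=-1$ one has $\tilde f=f=-1>-2=2s$, so the maximum selects $\tilde f$ and $g=f$ there; and because $2s$ has slope $2$ while $\tilde f$ has slope $<2$, the line $2s$ overtakes $\tilde f$ exactly once and stays strictly above it up to $s=0$, giving $g=2s$ near the new boundary with $g(0,\cdot)=0$. As the regularized maximum of two subharmonic functions is smooth and subharmonic, $-d(dg\circ j)\ge 0$ throughout, and $g<0$ on $[-1,0)$ since both arguments are. Finally I take $\omega$ to agree with $\omega_0$ on $\Sigma$ and with $ds\wedge dt$ near $s=0$, interpolating its positive density in between: in real dimension two any positive area form is compatible with $i$, and $-d(dg\circ j)=-d(df\circ j)=\omega_0=\omega$ holds on $\Sigma$ by hypothesis.

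I expect the bending step to be the main obstacle, since a subharmonic function cannot curve downward: $\tilde f$ cannot simply dip beneath the line $2s$, and the dominance pattern of the regularized maximum must be forced by geometry rather than by hand. The device that resolves this is the conformal zoom-out, which simultaneously pushes the interface slope below the target slope $2$ and makes $\triangle f$ small; this is exactly what lets the steeper line $2s$ overtake $\tilde f$ and guarantees the correct selection, so that both subharmonicity and the sign condition $g<0$ survive the transition. The one delicate point is controlling the $t$-dependence of $f$ at the seam while keeping the slope of $\tilde f$ uniformly below $2$; this is handled by the smallness of the normalized boundary data, together with the convexity in $s$ of the circle average $s\mapsto \ell^{-1}\int g(s,t)\,dt$, which holds precisely because $g$ is subharmonic.
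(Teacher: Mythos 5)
Your overall strategy --- holomorphic collar coordinates, a conformal ``zoom-out'' to shrink the boundary slope of $f$, then a regularized maximum against the harmonic function $2s$ --- is viable and genuinely different from the paper's route (the paper glues infinite cylinders to $\partial\Sigma$, extends $f$ by the explicit model $A_i(\e^s-1/\e)-1$, scales so that $A_i<\tfrac12$, and bends that model into $2s$ by the one-variable convex interpolation of Lemma~\ref{L:calculus}). But as written your construction fails at one concrete point: smoothness of $g$ at the seam $\partial\Sigma=\{s=-1\}$. Your extension $\tilde f=-1+f_s(-1,t)(s+1)+\nu(s+1)^2$ agrees with $f$ only to first order along the seam, and since the regularized maximum selects $\tilde f$ there, the glued function $g$ (equal to $f$ on $\Sigma$, to $\tilde f$ just outside) is $C^1$ but not $C^2$: from inside, $\partial_s^2 f(-1,t)$ equals the density of $\omega_0$ against $ds\wedge dt$ (here you correctly note $f_{tt}(-1,\cdot)=0$), which is a strictly positive \emph{function} of $t$; from outside, $\partial_s^2\tilde f\equiv 2\nu$ is a constant. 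These cannot match unless that density happens to be constant, which the hypotheses do not provide, and no choice of $\nu$ helps (nor would higher jets then match). The lemma requires a smooth $g$ --- indeed $d(dg\circ j)$ is not even classically defined for a merely $C^1$ function, and everything downstream ($\omega$, the metric, the eigenfunction) needs smoothness --- so this is a genuine gap, not a cosmetic one. A secondary slip: the overtaking threshold is slope less than $1$, not less than $2$; a function starting at height $-1$ with $s$-slope $a\in(1,2)$ is overtaken by $2s$ only at $s=(a-1)/(2-a)>0$, outside the collar, so your displayed hypothesis $0<f_s<2$ is not the one the argument needs (your stronger provision ``as small as I wish'' does cover it).

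The gap is repairable inside your framework. Replace the $1$-jet extension by a Whitney/Seeley extension $E$ of $f$ to a short strip $[-1,-1+\rho]\times\partial\Sigma$ matching the \emph{full} jet of $f$ along the seam; for small $\rho$ it remains strictly subharmonic by openness of that condition. Then set $g=\max_\varepsilon(E,h)$, where $h=h(s)$ is convex, equals $2s$ near $s=0$, lies below $E-\varepsilon$ at $s=-1$, and climbs above $E+\varepsilon$ before $s=-1+\rho$ (possible with initial slope still $<1$ after your zoom-out). The crossover then happens where $E$ is defined and smooth, $g$ agrees with $E$ --- hence with $f$ to all orders --- at the seam, and $g$ equals $h$ outside the strip; subharmonicity, negativity on $[-1,0)$, and the normal form near $s=0$ follow exactly as in your argument. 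Constructing such an $h$ is precisely the elementary convex interpolation the paper isolates as Lemma~\ref{L:calculus}. It is worth noting that the paper's own proof quietly faces the same seam issue (its exponential extension is smooth only if $f$ already has the standard form near $\partial\Sigma$, which is implicitly arranged in the choice of Stein structure), so identifying and honestly resolving the jet-matching at the seam is the real content missing from your write-up.
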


  Let $g_\pm$ be the (weakly) subharmonic functions, and let
  $\omega_\pm$ be the area forms from Lemma \ref{L:workhorse}.

Define the following function on
$S = S_- \cup [-1, 1] \times \Gamma \cup S_+$
by :
\begin{equation}
  F = \begin{cases} g_- &\text{on $S_- \cup [-1, 0] \times \partial
  S_-$}\\
	      -g_+(-s, t) &\text{on $[0, 1] \times \partial S_+$} \\
	      -g_+ &\text{on $S_+$}.
		  \end{cases}
\end{equation}

Note that $F$ then defines a smooth function on $S$, since for some
$\epsilon >0$,
$g_-(s,t) = 2s$ for $(s, t) \in [-\epsilon, 0] \times \bdy S_-$ and
$-g_+(-s, t) = 2s$ for
$(s, t) \in [0, \epsilon] \times \bdy S_+$.
Furthermore, the area form defined by :
\begin{equation}
  \omega = \begin{cases}
    w_- &\text{ on $S_- \cup [-1, 0] \times \partial S_-$} \\
    w_+ &\text{ on $[0, 1] \times \bdy S_+ \cup S_+$ }
  \end{cases}
\end{equation}
is a smooth area form on $S$.

It then follows that
\begin{align*}
  d( d F \circ j ) &= -\omega \text{ on $S_-$ } \\
	      &\le 0 \text{ on $S_- \cup [-1, 0] \times \bdy S_-$} \\
	      &=0 \text{ on $[-\epsilon, \epsilon] \times \Gamma$}\\
	      &\ge 0 \text{ on $[0,1]\times \bdy S_+ \cup S_+ $}\\
	      &= \omega \text{ on $S_+$}.
\end{align*}

By scaling $F$, we may set $|F| < \frac{\pi}{2}$.
Furthermore, by construction, $H(s, t) = Cs$ for $s$
close to $0$ in $[-1, 1] \times \Gamma$, and $C > 0$ a constant.
\end{proof}

We now present the proof of Lemma \ref{L:workhorse}.  This uses 
the fact that the Stein structure
on $\Sigma$ may be extended to a cylindrical end glued at the boundary.
We then deform the standard model of the cylindrical end to obtain
the desired weakly subharmonic function to have linear growth at the end.
In essence, this deformation
smoothes a strictly monotone, piecewise-smooth, convex function
on $\R$ to obtain a smooth convex function with a prescribed zero.

\begin{proof}[Proof of Lemma \ref{L:workhorse}]

  Denote by $\partial_i \Sigma$, $i = 1\cdots,N$, be the components of
  the boundary $\partial \Sigma$.
  First, complete $\Sigma$ by gluing the cylinder $[-1, +\infinity)
  \times S^1$
  to each boundary component $\partial_i \Sigma$.  Denote each of these
  cylinders by $Z_i$.
  We then extend the subharmonic function $f$ to $Z_i$ by the
  function given in the cylinder coordinates by
  \[
  f_i(s, t) = A_i (\e^{s} - 1/\e) - 1
  \]
  and
  extend the complex structure to the cylinders by $i$.
  This then extends the symplectic form by
  $\omega_0 = A_i \e^s ds \wedge dt$.
  Denote these extensions again by $f$, $j$, and $\omega_0$,
  which are now defined on $\Sigma \cup \bdy \Sigma \times [-1,
  \infinity)$.
  By scaling $f$ (and thus $\omega_0$)
  as necessary, we may assume $A_i < \frac{1}{2}$.

  At each boundary component, we now apply the following technical lemma,
  whose proof is a simple calculus exercise (see Figure \ref{F:calculus}).
  \begin{lemma} \label{L:calculus}
    For each constant $A \le 1$, there exists a function
    $G_A : [-1, 1] \to \R$ with the following properties:
    \begin{enumerate}
      \item $G_A'(s) > 0$ for all $s \in [-1, 1]$,
      \item $G_A''(s) \ge 0$ for all $s \in [-1, 1]$,
      \item $G_A(s) = A(\e^s -1/\e) -1$ for $s$ near $-1$,
      \item $G_A(s) = 2s$ for $s$ near $0$.
    \end{enumerate}
  \end{lemma}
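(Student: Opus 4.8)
The plan is to obtain $G_A$ by smoothing an explicit piecewise-smooth convex function that is exactly the prescribed exponential near $s = -1$ and exactly $2s$ near $s = 0$, joined at a single upward corner. First I would record the boundary data. The prescribed left piece $\ell(s) = A(\e^{s} - 1/\e) - 1$ has $\ell(-1) = -1$ and derivative $\ell'(s) = A\e^{s} \in (0, A]$, while the prescribed right piece $r(s) = 2s$ has constant derivative $2$. Since $0 < A \le 1 < \e$ (positivity of $A$ is forced by condition (1) together with (3), as $\ell'(s) = A\e^{s} > 0$ requires $A > 0$), the left slope $A\e^{s} \le A < 2$ stays strictly below the right slope on all of $[-1, 0]$.

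Next I would find where the two pieces can be joined continuously. Setting $h(s) = \ell(s) - 2s$, we have $h(-1) = 1 > 0$ and $h(0) = A(1 - 1/\e) - 1 \le (1 - 1/\e) - 1 = -1/\e < 0$, so by the intermediate value theorem there is an $s_0 \in (-1, 0)$ with $\ell(s_0) = 2 s_0$. The function equal to $\ell$ on $[-1, s_0]$ and to $2s$ on $[s_0, 1]$ is then continuous, strictly increasing, and convex: its one-sided derivative is smooth and increasing on each piece and jumps \emph{upward}, from $A\e^{s_0}$ to $2$, at $s_0$.

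Finally I would smooth this single corner at the level of the derivative. Among smooth non-decreasing functions $\phi$ on $[-1, 0]$ that equal $A\e^{s}$ for $s$ near $-1$ and equal $2$ for $s$ near $0$, the integral $\int_{-1}^{0}\phi$ can be prescribed to be any value strictly between $A(1 - 1/\e)$ and $2$. Because $A \le 1$ forces $A(1 - 1/\e) < 1 < 2$, I may choose such a $\phi$ with $\int_{-1}^{0}\phi = 1$, and then define $G_A(s) = -1 + \int_{-1}^{s}\phi$ on $[-1, 0]$ and $G_A(s) = 2s$ on $[0, 1]$. Monotonicity and positivity of $\phi$ give properties (1) and (2); since $\phi = A\e^{s}$ near $-1$, the primitive is exactly $\ell$ there, giving (3); and since $\int_{-1}^0 \phi = 1$ we get $G_A(0) = 0$, so with $\phi = 2$ near $0$ the primitive equals $2s$ in a two-sided neighbourhood of $0$, matching the affine piece on $[0, 1]$ and giving (4) together with smoothness at $0$. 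I expect the only non-formal point to be the integral normalization $\int_{-1}^0 \phi = 1$, which is exactly where the hypothesis $A \le 1$ enters: it guarantees that the required value $1$ lies in the open interval $(A(1 - 1/\e),\, 2)$ of integrals achievable by a monotone smoothing, so that the smoothed primitive joins the exponential and linear pieces with no leftover additive constant.
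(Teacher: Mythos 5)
Your proposal is correct and takes essentially the same approach as the paper, which leaves this lemma as a ``simple calculus exercise'' of interpolating between $A(\e^s - 1/\e) - 1$ and $2s$ while preserving convexity: your version makes this rigorous by smoothing at the level of the derivative and using the normalization $\int_{-1}^{0}\phi = 1$ (available precisely because $A \le 1$ gives $A(1-1/\e) < 1 < 2$) to kill the additive constant. Your side remark that $A > 0$ is implicitly assumed is also right; in the paper's application the constants $A_i$ are positive because $\omega_0 = A_i \e^s\, ds \wedge dt$ must be a volume form.
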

  \begin{figure} 
    \includegraphics[width=2in]{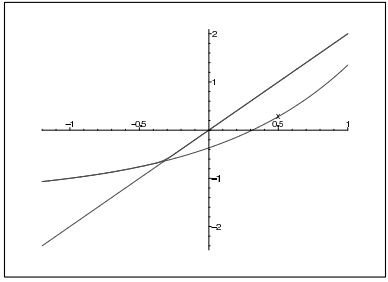}
    \caption{The desired function $G_A$ is obtained by interpolating
    between the two functions $A(\e^s - 1/\e) -1$ and $2s$, preserving
    convexity.}
    \label{F:calculus}
  \end{figure}


For each $i = 1\cdots{N}$, let $G_{A_i}$ be as given
by Lemma \ref{L:calculus}.
Introduce the following function, $g$
defined on $\Sigma \cup [-1, 0] \times \partial \Sigma$ :
\begin{equation*}
  g = \begin{cases} f &\text{on $\Sigma$} \\
    G_{A_i} &\text{ on $[-1, 0] \times \partial_i \Sigma$}
  \end{cases}
\end{equation*}

Then, observe :
\begin{equation} \label{E:subharmonic}
d( d g \circ j ) = \begin{cases} - \omega_0 &\text{ on $\Sigma$ } \\
		      - G_A''(s)ds\wedge dt
		      &\text{ on $[-1, 0] \times \partial_i \Sigma$} .
		    \end{cases}
    \end{equation}
Also note that in any of the $(s, t)$ coordinates near
$\partial_i \Sigma$, $g(s, t) = 2s$ for $s$ sufficiently close to $0$.

Define :
\[
\omega = \begin{cases} \omega_0 &\text{ on $\Sigma$ } \\
			\mu_i(s) ds \wedge dt
			&\text{ on $[-1, 0] \times \partial_i \Sigma$}
	\end{cases}
\]
where $\mu_i(s) > 0$, with the properties that $\mu_i(s) = A_i\e^s$ for
$s$ near $\pm 1$  and $\mu_i(s) = 1$ for $s$ near $0$.
Thus, from Equation \eqref{E:subharmonic}, there exists a non-negative
function $K : \Sigma \cup [-1, 0] \times \bdy \Sigma \rightarrow \R$
so that
\[
d ( dg \circ j ) = - K \omega.
\]
Furthermore, $K = 1$ on $\Sigma$ and $K = 0$ for $s$ near $0$ in $[-1,
\infinity) \times \bdy \Sigma$.
This therefore constructs the desired $g$ and $\omega$.
\end{proof}

\subsection*{Acknowledgements}
I would like to thank Richard Siefring and Joe Coffey for suggesting this
problem to me, and for explaining why it is a natural condition to require.  
I would also like to thank Ko Honda for helpful discussions and encouragement, and
Paolo Ghiggini for both encouragement and helpful comments on a draft.

\begin{bibdiv}
\begin{biblist}

\bib{EtnyreKomen}{unpublished}{
      author={Etnyre, John},
      author={Komendarczyk, Rafal},
       title={A bound for the radius of a tight ball in a contact 3-manifold},
        date={2009},
        note={arXiv:0906.3487v1 [math.SG]},
}

\bib{EtnyreKomenMassot}{unpublished}{
	author={Etnyre, John},
	author={Komendarczyk, Rafal},
	author={Massot, Patrick},
	title={Tightness in contact metric manifolds},
	note={in preparation}
	}

\bib{GirouxConvex}{article}{
      author={Giroux, Emmanuel},
       title={Convexit\'e en topologie de contact},
        date={1991},
        ISSN={0010-2571},
     journal={Comment. Math. Helv.},
      volume={66},
      number={4},
       pages={637\ndash 677},
}

\bib{Komendarczyk}{article}{
      author={Komendarczyk, R.},
       title={On the contact geometry of nodal sets},
        date={2006},
        ISSN={0002-9947},
     journal={Trans. Amer. Math. Soc.},
      volume={358},
      number={6},
       pages={2399\ndash 2413 (electronic)},
}

\end{biblist}
\end{bibdiv}

\end{document}